\newtheorem{thm}{Theorem}[section]
\newtheorem{main-thm}{Main Theorem}
\newtheorem{prop}[thm]{Proposition}
\newtheorem{cor}[thm]{Corollary}
\theoremstyle{definition}
\newtheorem{defn}[thm]{Definition}
\numberwithin{equation}{section}
\begin{document}
%\linenumbers
\title{Counting Generic Quadrisecants of Polygonal Knots}
\author{Aldo-Hilario Cruz-Cota, Teresita Ramirez-Rosas}
\address{Department of Mathematics and Statistics, University of Nebraska at Kearney, Kearney, NE 68845, USA}
\email{cruzcotaah@unk.edu, tere.ramirez.rosas@gmail.com}

\begin{abstract} Let $K$ be a polygonal knot in general position with vertex set $V$. A \emph{generic quadrisecant} of $K$ is a line that is disjoint from the set $V$ and intersects $K$ in exactly four distinct points. We give an upper bound for the number of generic quadrisecants of a polygonal knot $K$ in general position. This upper bound is in terms of the number of edges of $K$.
\end{abstract}

\subjclass[2010]{Primary \emph{57M25}}
%\subjclass[2010]{;}
\keywords{Polygonal knots; general position; quadrisecants of knots}

\maketitle

\section{Introduction}

In this article, we study polygonal knots in three dimensional space that are in general position. Given such a knot $K$, we define a \emph{quadrisecant} of $K$ as an unoriented line that intersects $K$ in exactly four distinct points. We require that these points are not vertices of the knot, in which case we say that the quadrisecant is \emph{generic}.

Using geometric and combinatorial arguments, we give an upper bound for the number of generic quadrisecants of a polygonal knot $K$ in general position. This bound is in terms of the number $n \geq 3$ of edges of $K$. More precisely, we prove the following.

\begin{main-thm} \label{main-thm-1}
Let $K$ be a polygonal knot in general position, with exactly $n$ edges. Then $K$ has at most $U_n=\dfrac{n}{12}(n-3)(n-4)(n-5)$ generic quadrisecants.
\end{main-thm}

Applying Main Theorem \ref{main-thm-1} to polygonal knots with few edges, we obtain the following.

\begin{enumerate}
 \item If $n \leq 5$, then $K$ has no generic quadrisecant.
 \item If $n=6$, then $K$ has at most three generic quadrisecants.
 \item If $n=7$, then $K$ has at most $14$ generic quadrisecants.
\end{enumerate}

Using a result of G. Jin and S. Park (\cite{Taek-1}), we can prove that the above bound is sharp for $n=6$. In other words, a hexagonal trefoil knot has exactly three quadrisecants, all of which are generic.

Quadrisecants of polygonal knots in $\mathbb{R}^3$ have been studied by many people, such as E. Pannwitz, H. Morton, D. Mond, G. Kuperberg and E. Denne. The study of quadrisecants started in $1933$ with E. Pannwitz's doctoral dissertation (\cite{Pann33}). There, she found a lower bound for the number of quadrisecants of non-trivial generic polygonal knots. This bound is in terms of the minimal number of boundary singularities for a disk bounded by $K$. Later, H. Morton and D. Mond (\cite{Morton}) proved that every non-trivial generic knot has a quadrisecant, and G. Kuperberg extended their result to non-trivial tame knots and links (\cite{Kuperberg}). More recently, E. Denne (\cite{dennethesis}) proved that essential alternating quadrisecants exist for all non-trivial tame knots.
\subsection*{Notation} 
Unless otherwise stated, all polygonal knots studied in this article are embedded in the three-dimensional Euclidean space $\mathbb{R}^3$. Such a knot will be denoted by $K$. The cardinality of a set $A$ is denoted by $|A|$. Given a set $A$, with $|A|=n$, ${n \choose k}$ denotes the number of subsets of $A$ of cardinality $k$. The symbol $\sqcup$ denotes the disjoint union of sets.

\section{Preliminaries}

It is well-known that a triple of pairwise skew lines $E_1,E_2,E_3$ determines a unique quadric. This quadric is a doubly-ruled surface $S$ that is either a hyperbolic paraboloid, if the three lines are parallel to one plane, or a hyperboloid of one sheet, otherwise (see for example \cite{Geom-Imag-Hilbert}). The lines $E_1,E_2,E_3$ belong to one of the rulings of $S$, and every line intersecting all those three lines belongs to the other ruling of $S$. Further, every point in $S$ lies on a unique line from each ruling (see \cite{dennethesis}, \cite{Otal-prop-elem} and \cite{Pottmann-comp-lin-geom}).

We now define the type of polygonal knots that we will consider in this article.

\begin{defn} \label{def-knot-gen-pos} We say that the polygonal knot $K$ in $\mathbb{R}^3$ is in \emph{general position} if the following conditions are satisfied:
\begin{itemize}
 \item No four vertices of $K$ are coplanar.
 \item Given three edges $e_1,e_2,e_3$ of $K$ that are pairwise skew, no other edge of $K$ is contained in the quadric generated by $e_1,e_2,e_3$. 
\end{itemize}
\end{defn}

The quadrisecants of knots that we will study are defined as follows.

\begin{defn} Let $K$ be a polygonal knot in general position with vertex set $V$. A \emph{generic quadrisecant} of $K$ is an unoriented line that is disjoint from the set $V$ and intersects $K$ in exactly four distinct points.
\end{defn}

In this paper we are interested in giving an upper bound for the number of generic quadrisecants of a polygonal knot $K$ in general position. This upper bound is in terms of the number of edges of $K$. We start by estimating the number of generic quadrisecants that intersect a given collection of four edges of $K$ that are pairwise skew.

\begin{prop} \label{No-3-QS-skew-edges} Let $K$ be a knot in general position. Let $\mathcal{E}_4$ be a collection of four distinct edges of $K$ that are pairwise skew. Then there are at most two generic quadrisecants of $K$ that intersect all edges in  $\mathcal{E}_4$.
\end{prop}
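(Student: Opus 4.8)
The plan is to exploit the quadric surface structure described in the Preliminaries. Given the collection $\mathcal{E}_4 = \{e_1, e_2, e_3, e_4\}$ of four pairwise skew edges, I would first consider the three edges $e_1, e_2, e_3$. Since they are pairwise skew, they determine a unique doubly-ruled quadric $S$ (a hyperboloid of one sheet or a hyperbolic paraboloid), with $e_1, e_2, e_3$ lying on one ruling of $S$. By the quoted fact, every line meeting all three of $e_1, e_2, e_3$ lies in the opposite ruling of $S$. Thus any generic quadrisecant of $K$ intersecting all of $\mathcal{E}_4$ is in particular a line of the second ruling of $S$ that also meets $e_4$.

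The key reduction is then to count lines in the second ruling of $S$ that intersect the edge $e_4$. First I would observe that, by the general position hypothesis (specifically the second condition in Definition \ref{def-knot-gen-pos}), the edge $e_4$ is \emph{not} contained in $S$; hence $e_4$ meets the quadric $S$ in at most two points. This is the heart of the argument: a line not lying on a quadric surface intersects it in at most two points, since substituting the line's parametrization into the degree-two defining equation of $S$ yields a quadratic (or lower) polynomial, which has at most two roots. Each such intersection point lies on a \emph{unique} line of the second ruling (again by the quoted property that every point of $S$ lies on exactly one line from each ruling), and a line of the second ruling meets $e_4$ precisely when it passes through one of these intersection points. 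Therefore at most two lines of the second ruling meet $e_4$, giving at most two candidate quadrisecants.

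To finish, I would argue that each such candidate line, being a line of the second ruling, automatically meets $e_1, e_2, e_3$, and by construction meets $e_4$ as well, so it is a secant through all four edges; the at most two points of $e_4 \cap S$ thus produce at most two lines that can possibly be generic quadrisecants intersecting all of $\mathcal{E}_4$. Since genericity (disjointness from $V$ and exactly four distinct intersection points) only further restricts this set, the count of at most two is preserved. This yields the claimed bound.

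The main obstacle I anticipate is the careful handling of degenerate incidences. One must ensure that a candidate line of the second ruling actually meets each $e_i$ in a point lying \emph{on the edge} (a finite segment) rather than on its extension, and that the four intersection points are genuinely distinct and avoid the vertex set $V$. The general position conditions are designed precisely to rule out coincidences—for instance, preventing $e_4$ from lying on $S$, which would otherwise allow infinitely many common secants—so the crux is to invoke Definition \ref{def-knot-gen-pos} cleanly to guarantee $e_4 \not\subset S$ and hence the finiteness (at most two) of the intersection. The passage from "lines meeting all four edge-\emph{lines}" to "lines meeting all four edge-\emph{segments}" only decreases the count, so it does not threaten the upper bound, but it should be stated explicitly for rigor.
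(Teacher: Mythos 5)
Your proposal is correct and follows essentially the same route as the paper's proof: both pass to the quadric $S$ determined by the lines through $e_1,e_2,e_3$, use general position to conclude $e_4 \not\subset S$ so that $e_4 \cap S$ has at most two points, and then use the fact that each point of $S$ lies on a unique line of the relevant ruling to conclude that at most two quadrisecants can arise. The paper phrases this last step as injectivity of the map $l \mapsto p_l$ sending a quadrisecant to its intersection point with $e_4$, which is exactly your observation that each intersection point determines a unique line of the second ruling.
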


\begin{proof}
Let $e_1,e_2,e_3,e_4$ be the four edges in the collection $\mathcal{E}_4$. Each edge $e_i$ generates a line $E_i$ ($i=1,2,3,4)$. Let $S$ be the doubly-ruled quadric generated by $E_1,E_2,E_3$. Since $K$ is in general position, the edge $e_4$ is not contained in $S$. Therefore, $e_4$ intersects the quadric $S$ in at most two points.  

Let $\mathcal{Q}_{\mathcal{E}_4}$ be the set of all generic quadrisecants of $K$ that intersect all edges in  $\mathcal{E}_4$. For $l \in \mathcal{Q}_{\mathcal{E}_4}$, we define the point $p_l$ as the point of intersection between the edge $e_4$ and the line $l$. Since $l$ intersects all lines $E_1,E_2,E_3$, then it belongs to a ruling $\mathcal{R}$ of $S$. Also, $p_l \in e_4 \cap S$, and so the cardinality of the set $\{p_l \colon l \in \mathcal{Q}_{\mathcal{E}_4} \}$ is at most two. To complete the proof, we show that the function $l \mapsto p_l$ is one-to-one. Suppose that $p_l=p_{l'}$, where $l \in \mathcal{Q}_{\mathcal{E}_4}$ and $l' \in \mathcal{Q}_{\mathcal{E}_4}$. Then the point $p_l=p_{l'}$ lies in two lines, $l$ and $l'$, that belong to the ruling $\mathcal{R}$ of $S$. Since every point in $S$ lies on a unique line from $\mathcal{R}$, then $l=l'$.
\end{proof}

Our next result complements Proposition \ref{No-3-QS-skew-edges}.

\begin{prop} \label{No-2-QS-coplanar-edges} Let $K$ be a knot in general position. Let $\mathcal{E}_4$ be a collection of four distinct edges of $K$, two of which are coplanar. Then there is at most one generic quadrisecant of $K$ that intersects all edges in  $\mathcal{E}_4$.
\end{prop}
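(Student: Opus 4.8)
The plan is to show that any generic quadrisecant meeting the two coplanar edges is forced to lie inside the plane they determine, and that this planar constraint, combined with general position, pins the line down uniquely. Write $\mathcal{Q}$ for the set of generic quadrisecants of $K$ that intersect all four edges of $\mathcal{E}_4$; the goal is $|\mathcal{Q}| \leq 1$. First I would relabel so that $e_1$ and $e_2$ are the coplanar pair, contained in a common plane $P$. A preliminary observation is that $e_1$ and $e_2$ must actually share a vertex: if they were disjoint, their four endpoints would be four coplanar vertices of $K$, contradicting general position. Hence $e_1$ and $e_2$ are adjacent, meeting at a single vertex $v$, and $P$ is the unique plane they span.

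Next I would argue that every $l \in \mathcal{Q}$ satisfies $l \subset P$. Such a line meets $e_1$ at some point $p_1$ and $e_2$ at some point $p_2$; since $l$ is generic, neither point is a vertex, so in particular $p_1 \neq v \neq p_2$ and therefore $p_1 \neq p_2$. As $p_1$ and $p_2$ are two distinct points of the plane $P$ lying on $l$, the whole line $l$ is contained in $P$.

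Finally I would pin down $l$ using $e_3$ and $e_4$. Because $l \subset P$ and $l$ meets both $e_3$ and $e_4$, each of these edges meets $P$. General position prevents $e_3$ or $e_4$ from lying in $P$, since that would again place four vertices of $K$ in $P$ (counting the three vertices of $e_1 \cup e_2$), so each edge meets $P$ in exactly one point, say $q_3$ and $q_4$. Any $l \in \mathcal{Q}$ must pass through both $q_3$ and $q_4$. If $q_3 \neq q_4$, these two points determine $l$ uniquely, giving at most one generic quadrisecant; if $q_3 = q_4$, this common point is necessarily a shared vertex of $e_3$ and $e_4$ (embeddedness rules out any other coincidence of two distinct edges), and no generic quadrisecant may pass through a vertex, so $\mathcal{Q}$ is empty. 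In either case $|\mathcal{Q}| \leq 1$.

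The crux of the argument, and the step I expect to require the most care, is the repeated and precise use of the no-four-coplanar-vertices hypothesis: first to force $e_1$ and $e_2$ to be adjacent, and then to exclude $e_3, e_4 \subset P$ via a careful count of the distinct vertices lying in $P$. Disposing cleanly of the degenerate case $q_3 = q_4$ also hinges on combining embeddedness with the genericity (non-vertex) condition, so I would state that bookkeeping explicitly rather than leave it implicit.
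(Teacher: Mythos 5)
Your proposal is correct and follows essentially the same route as the paper: force the quadrisecant into the plane $P$ spanned by the two coplanar (hence adjacent) edges, then use the single intersection points of $e_3$ and $e_4$ with $P$ to pin the line down. The only cosmetic difference is that the paper argues by contradiction with two distinct quadrisecants and disposes of the degenerate case $q_3=q_4$ by exhibiting a fourth coplanar vertex, whereas you dispose of it by noting a generic quadrisecant cannot pass through a vertex; both are valid.
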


\begin{proof} Let $e_1,e_2,e_3,e_4$ be the four edges in the collection $\mathcal{E}_4$, and suppose that $e_1$ and $e_2$ lie in a plane $P$.  By general position, $e_1$ and $e_2$ are adjacent edges. Arguing toward a contradiction, suppose that $l_1$ and $l_2$ are two distinct generic quadrisecants of $K$ that intersect all edges in  $\mathcal{E}_4$. 

Since $e_1$ and $e_2$ lie in $P$, then the same is true for $l_1$ and $l_2$. Since both $l_1$ and $l_2$ intersect the edge $e_i$, then so does $P$ ($i=3,4$). By general position, the edge $e_i$ intersects $P$ in a single point $p_i$, which is a point of intersection between the lines $l_1$ and $l_2$ ($i=3,4$). Thus, $p_3=p_4$, and so the edge $e_3$ intersects the edge $e_4$. This means that the point $p_3=p_4$ is a vertex of both $e_3$ and $e_4$, and this vertex is different from those of edges $e_1$ and $e_2$ (because $K$ is a knot). This contradicts general position.

\end{proof}

\section{Quadrisecants Intersecting Consecutive Edges of the Knot}

To prove some of the results in the next section, we will need to analyze collections of edges of a polygonal knot that have the property defined below.

\begin{defn} Let $\mathcal{E}'$ be a collection of distinct edges of a polygonal knot $K$. We will say that the edges in $\mathcal{E}'$ are \emph{consecutive} if their union  (with the subspace topology induced from $K$) is connected.
\end{defn}

Since two consecutive edges of a polygonal knot are always coplanar, then Proposition \ref{No-2-QS-coplanar-edges} implies the following. 

\begin{prop} \label{No-2-QS-int-2-cons-edges}  Let $K$ be a knot in general position. Let $\mathcal{E}_4$ be a collection of four distinct edges of $K$ that contains a pair of consecutive edges. Then there is at most one generic quadrisecant of $K$ that intersects all edges in  $\mathcal{E}_4$.
\end{prop}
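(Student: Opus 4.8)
The plan is to derive Proposition~\ref{No-2-QS-int-2-cons-edges} as an immediate consequence of Proposition~\ref{No-2-QS-coplanar-edges}, using the fact that consecutive edges are coplanar. The statement of the proposition is about a collection $\mathcal{E}_4$ of four distinct edges that contains a pair of consecutive edges, so the first step is to name such a pair, say $e_1$ and $e_2$, and to observe that by definition of consecutiveness their union is connected, meaning they share a common vertex.

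Next, I would argue that two edges sharing a common vertex are coplanar. This is the key geometric observation: if $e_1$ and $e_2$ meet at a vertex $v$, then the two edges span (at most) a two-dimensional affine subspace, namely the plane through $v$ containing both edge directions. Even in the degenerate case where the two edges are collinear, they still lie in a common plane; but in a knot in general position consecutive edges are not collinear, so they genuinely determine a unique plane $P$. In either case, $e_1$ and $e_2$ are coplanar.

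Having established that $\mathcal{E}_4$ contains two coplanar edges, I would then invoke Proposition~\ref{No-2-QS-coplanar-edges} directly: since that proposition asserts that a collection of four distinct edges, two of which are coplanar, admits at most one generic quadrisecant intersecting all of them, the same conclusion holds for $\mathcal{E}_4$. This completes the argument.

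I do not expect any genuine obstacle here, since the proposition is essentially a specialization of the earlier one. The only point requiring a moment's care is the justification that consecutive (i.e.\ adjacent, vertex-sharing) edges are coplanar; this is precisely the remark the authors make immediately before the statement, namely that ``two consecutive edges of a polygonal knot are always coplanar.'' Thus the proof is a one-line deduction once that remark is recorded, and I would keep the write-up correspondingly brief.
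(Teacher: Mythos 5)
Your proposal is correct and follows exactly the paper's route: the authors also deduce this proposition immediately from Proposition~\ref{No-2-QS-coplanar-edges} via the remark that two consecutive edges of a polygonal knot are always coplanar. Your added justification that vertex-sharing edges span a common plane is a harmless elaboration of that same one-line argument.
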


We now investigate the existence of generic quadrisecants intersecting two or three consecutive edges of a polygonal knot.

\begin{prop} \label{No-QS-int-3-cons-edges}
 There are no generic quadrisecants of $K$ intersecting three distinct consecutive edges of $K$.
\end{prop}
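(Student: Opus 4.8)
The plan is to combine the elementary fact that two consecutive edges of $K$ are coplanar with the general position hypothesis that no four vertices of $K$ are coplanar. First I would label three consecutive edges of $K$ as $f_1=[x,u]$, $f_2=[u,w]$, $f_3=[w,y]$, so that $f_1,f_2$ share the vertex $u$ and $f_2,f_3$ share the vertex $w$. Arguing toward a contradiction, I would assume that $l$ is a generic quadrisecant of $K$ meeting all three. Since $l$ is disjoint from the vertex set $V$, it cannot contain any whole edge, so it meets each $f_i$ in a single interior (non-vertex) point $p_i$, and these three points are collinear, all lying on $l$.

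Next I would produce a plane $P$ containing both $f_1$ and $f_2$. These edges are consecutive, hence coplanar; they may be assumed non-collinear, since three collinear consecutive vertices would force $l$ to be the common line through $x,u,w$ and hence to contain the vertex $u$, already a contradiction. Thus $P$ is the unique plane spanned by $f_1$ and $f_2$. The key step is then the observation that $p_1\in f_1\subset P$ and $p_2\in f_2\subset P$ are \emph{distinct}: they could coincide only at $f_1\cap f_2=\{u\}$, a vertex that $l$ avoids. Hence $l$ passes through two distinct points of $P$, and therefore $l\subset P$.

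Finally I would extract the contradiction from the third edge. Since $p_3\in l\subset P$, we have $p_3\in P\cap f_3$; moreover the shared vertex $w$ lies in $f_2\subset P$, so $w\in P\cap f_3$ as well. If $p_3\neq w$, then $f_3$ meets $P$ in two distinct points, forcing $f_3\subset P$, and then all four vertices $x,u,w,y$ would be coplanar, contradicting general position. If instead $p_3=w$, then $l$ contains the vertex $w\in V$, contradicting that $l$ is generic. Either way we reach a contradiction, so no generic quadrisecant can meet three consecutive edges.

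The main obstacle is really the single clean observation that confines $l$ to the plane $P$ of the two consecutive edges; once that is in place, the no-four-coplanar-vertices hypothesis does the remaining work. The only care required is in disposing of the degenerate configurations — coincident intersection points, collinear consecutive vertices, or the third edge lying entirely in $P$ — each of which is immediately incompatible either with $l$ being disjoint from $V$ or with the general position of $K$.
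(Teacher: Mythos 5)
Your argument is essentially the paper's: confine the quadrisecant $l$ to the plane $P$ spanned by two of the consecutive edges, conclude that the third edge also lies in $P$, and contradict the no-four-coplanar-vertices condition. You actually justify the step the paper leaves implicit (why $l\subset P$, via $p_1\neq p_2$), and your handling of the degenerate sub-cases is correct. The one gap is at $n=3$: your final contradiction needs $x,u,w,y$ to be \emph{four distinct} vertices, but when $K$ is a triangle the three consecutive edges are all of $K$ and $y=x$, so only three vertices are involved and the general position hypothesis is vacuous there. The paper disposes of this case separately at the outset (``if $n=3$ the result is clear''), and you should too --- e.g.\ a line not in the plane of the triangle meets $K$ in at most one point, while a line in that plane avoiding the vertices meets the boundary of the convex triangle in at most two points, so no quadrisecant exists. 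With that one-line addendum your proof is complete.
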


\begin{proof} Let $n$ be the number of edges of $K$. If $n=3$, then the result is clear. Suppose that $n>3$ and that  $l$ is a generic quadrisecant that intersects three distinct consecutive edges of $K$. Then the plane $P$ that contains $l$ and one of the three consecutive edges also contains the other two edges. Since $n>3$, then the endpoints of the three consecutive edges are four distinct vertices of $K$, and these vertices lie in the plane $P$. This contradicts that $K$ is in general position.

\end{proof}

Proposition \ref{No-QS-int-3-cons-edges} has the following immediate corollary.

\begin{cor} \label{No-QS-int-4-cons-edges}
  There are no generic quadrisecants of $K$ intersecting four distinct consecutive edges of $K$.
\end{cor}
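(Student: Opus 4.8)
The plan is to derive this directly from Proposition \ref{No-QS-int-3-cons-edges} by the obvious containment argument: any collection of four consecutive edges contains a sub-collection of three consecutive edges, so a quadrisecant meeting all four must meet three of them, which the previous proposition forbids.

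First I would fix a generic quadrisecant $l$ of $K$ and suppose, toward a contradiction, that it intersects four distinct consecutive edges. Since these four edges are consecutive, their union is connected in the subspace topology induced from $K$, so they form a path along $K$ and can be labeled in cyclic order as $e_1, e_2, e_3, e_4$, with $e_i$ sharing an endpoint with $e_{i+1}$. The key step is then the observation that the sub-collection $\{e_1, e_2, e_3\}$ is itself a set of three distinct consecutive edges, since its union is the connected sub-path $e_1 \cup e_2 \cup e_3$.

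Next I would note that because $l$ intersects every edge in the four-edge collection, it in particular intersects each of $e_1$, $e_2$, and $e_3$. Thus $l$ is a generic quadrisecant of $K$ intersecting three distinct consecutive edges of $K$, which contradicts Proposition \ref{No-QS-int-3-cons-edges}. This contradiction establishes the corollary.

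I do not expect a genuine obstacle here, as the statement is essentially a logical weakening of the three-edge case; the only point requiring a moment's care is the passage from ``four consecutive edges'' to ``three consecutive edges,'' namely verifying that one may always select three of the four edges whose union remains connected. This is immediate once the edges are ordered along the knot, so the proof reduces to invoking the preceding proposition.
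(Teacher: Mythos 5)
Your proposal is correct and matches the paper, which states the corollary as an immediate consequence of Proposition \ref{No-QS-int-3-cons-edges} via exactly the containment argument you give: any four distinct consecutive edges contain three distinct consecutive edges, so a quadrisecant meeting all four would violate that proposition. The paper offers no further detail, so your write-up is if anything more explicit than the original.
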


The following proposition complements Proposition \ref{No-2-QS-int-2-cons-edges}.

\begin{prop} \label{No-3-QS-int-no-cons-edges} Let $\mathcal{E}_4$ be a collection of four distinct edges of $K$ that contains no pair of consecutive edges. Then there are at most two generic quadrisecants of $K$ that intersect all edges in  $\mathcal{E}_4$.
\end{prop}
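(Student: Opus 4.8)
The plan is to reduce this statement to Proposition \ref{No-3-QS-skew-edges} by showing that, under the hypothesis that $\mathcal{E}_4$ contains no pair of consecutive edges, the four edges are automatically pairwise skew. Once this reduction is in place, the bound of two generic quadrisecants follows at once.

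First I would record the elementary fact that two distinct edges of $K$ are \emph{consecutive} precisely when they share a vertex. Indeed, for two edges the union appearing in the definition of consecutive is connected if and only if the edges meet, and since $K$ is embedded the only possible point of contact is a common endpoint. Consequently, the hypothesis that $\mathcal{E}_4$ contains no pair of consecutive edges means that no two of its edges share a vertex, so the eight endpoints of $e_1, e_2, e_3, e_4$ are eight distinct vertices of $K$.

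Next I would argue that any two of these edges, say $e_i$ and $e_j$, are skew. Since they share no vertex, their four endpoints are four distinct vertices of $K$. Two line segments in $\mathbb{R}^3$ lie in a common plane if and only if their four endpoints are coplanar; but general position forbids four coplanar vertices, so $e_i$ and $e_j$ cannot be coplanar, and hence the lines $E_i$ and $E_j$ spanned by them are neither intersecting nor parallel. Thus $e_i$ and $e_j$ are skew, and since this holds for every pair, the collection $\mathcal{E}_4$ consists of four pairwise skew edges.

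With the reduction complete, I would invoke Proposition \ref{No-3-QS-skew-edges}, which bounds by two the number of generic quadrisecants meeting four pairwise skew edges, thereby finishing the proof. I expect essentially no analytic obstacle here: the entire content is the clean translation of the no-four-coplanar-vertices condition into pairwise skewness, so the only points demanding care are the equivalence between \emph{consecutive} and \emph{sharing a vertex} and the coplanarity criterion for segments, both of which rest directly on the general position hypothesis rather than on any further geometric input.
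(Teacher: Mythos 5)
Your proof is correct, but it takes a slightly different route from the paper's. The paper handles this proposition by a two-case split: if the four edges are pairwise skew it invokes Proposition \ref{No-3-QS-skew-edges}, and if some pair is coplanar it invokes Proposition \ref{No-2-QS-coplanar-edges} (which gives the even better bound of one). You instead show that the second case cannot occur under the hypothesis: two edges of an embedded polygon are consecutive exactly when they share a vertex, so a non-consecutive pair has four distinct endpoints, and the no-four-coplanar-vertices condition then forces that pair to be skew; hence only Proposition \ref{No-3-QS-skew-edges} is needed. Your observation is essentially the contrapositive of a step inside the paper's proof of Proposition \ref{No-2-QS-coplanar-edges}, where general position is used to conclude that two coplanar edges must be adjacent. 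The payoff of your version is economy and clarity: it bypasses Proposition \ref{No-2-QS-coplanar-edges} altogether and makes explicit the useful fact that non-consecutive edges of a knot in general position are automatically pairwise skew. The paper's version is marginally more robust in that it does not rely on that reduction and simply covers both possibilities, at the cost of invoking an extra proposition. Either way the bound of two is correctly established.
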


\begin{proof}
 If all edges in $\mathcal{E}_4$ are pairwise skew, then Proposition \ref{No-3-QS-skew-edges} implies that there are at most two generic quadrisecants of $K$ intersecting all edges in  $\mathcal{E}_4$. If the collection $\mathcal{E}_4$ contains a pair of coplanar edges, then Proposition \ref{No-2-QS-coplanar-edges} implies that there is at most one generic quadrisecant of $K$ intersecting all edges in  $\mathcal{E}_4$.
\end{proof}

\section{Combinatorial Results} \label{sec-comb-res}

For a collection $\mathcal{E}_4$ of four distinct edges of the knot $K$, the following theorem gives an upper bound for the number of generic quadrisecants of $K$ that  intersect all edges in $\mathcal{E}_4$.

\begin{thm} \label{thm-all-cases-c} Let $K$ be a polygonal knot in general position.  Given a collection $\mathcal{E}_4$ of four distinct edges of $K$, consider the union $X_{\mathcal{E}_4}$ of the edges in $\mathcal{E}_4$ (with the subspace topology induced from $K$). Let $c$ be the number of connected components of the space $X_{\mathcal{E}_4}$. 
 \begin{itemize}
  \item If $c=1$, then there are no generic quadrisecants intersecting all edges in  $\mathcal{E}_4$.
  \item If $c=2$, and one of the connected components of $X_{\mathcal{E}_4}$ consists of a single edge of $K$, then there are no generic quadrisecants intersecting all edges in  $\mathcal{E}_4$.
  \item If $c=2$, and each of the connected components of $X_{\mathcal{E}_4}$ is the union of exactly two consecutive edges of $K$, then there is at most one generic quadrisecant intersecting all edges in  $\mathcal{E}_4$.
  \item If $c=3$, then there is at most one generic quadrisecant intersecting all edges in  $\mathcal{E}_4$.
  \item If $c=4$, then there are at most two generic quadrisecants intersecting all edges in  $\mathcal{E}_4$.
 \end{itemize}
\end{thm}

\begin{proof} We divide the proof into four cases.

\begin{description}
 \item [Case 1] $c=1$. In this case Corollary \ref{No-QS-int-4-cons-edges} implies the result.
  \item [Case 2] $c=2$. If one of the connected components of $X_{\mathcal{E}_4}$ consists of a single edge, then the result follows from Proposition \ref{No-QS-int-3-cons-edges}. Otherwise, the result follows from Proposition \ref{No-2-QS-int-2-cons-edges}.
  \item [Case 3] $c=3$. Since the collection $\mathcal{E}_4$ contains a pair of consecutive edges, then Proposition \ref{No-2-QS-int-2-cons-edges} implies the result.
    \item [Case 4] $c=4$. Since $\mathcal{E}_4$ contains no pair of consecutive edges, then the result follows from Proposition \ref{No-3-QS-int-no-cons-edges}.
\end{description}
\end{proof}

To obtain an upper bound for the number of generic quadrisecants of a knot, we need to consider the number of collections of four distinct edges of the knot for each of the cases stated in Theorem \ref{thm-all-cases-c}. These numbers are defined as follows.

\begin{defn} \label{defn-Sc} Let $K$ be a polygonal knot in general position with exactly $n$ edges. For a collection $\mathcal{E}_4$ of four distinct edges of $K$, consider the union $X_{\mathcal{E}_4}$ of the edges in $\mathcal{E}_4$ (with the subspace topology induced from $K$).
\begin{itemize}
 \item For $c=1,2,3,4$, let $S^{(n)}_{c}(K)$ be the number of collections $\mathcal{E}_4$ of four distinct edges of $K$ such that $X_{\mathcal{E}_4}$ has exactly $c$ connected components.
 \item For $c=2$ we also define the following.
 
 \begin{itemize}
 \item Let $S^{(n)}_{2,1}(K)$ be the number of collections $\mathcal{E}_4$ of four distinct edges of $K$ such that $X_{\mathcal{E}_4}$ has exactly two connected components, and one of these components consists of a single edge.
  \item Let $S^{(n)}_{2,2}(K)$ be the number of collections $\mathcal{E}_4$ of four distinct edges of $K$ such that $X_{\mathcal{E}_4}$ has exactly two connected components, and each of these components is the union of exactly two consecutive edges.
 \end{itemize}
\end{itemize}
\end{defn}

By definition,
 \begin{equation} \label{eqn-sum-S_c=2}
 S^{(n)}_{2}(K)=S^{(n)}_{2,1}(K)+S^{(n)}_{2,2}(K);
 \end{equation}
 
 \begin{equation} \label{eqn-sum-S_c's}
 S^{(n)}_{1}(K)+S^{(n)}_{2,1}(K)+S^{(n)}_{2,2}(K)+S^{(n)}_{3}(K)+S^{(n)}_{4}(K)={n \choose 4}.
 \end{equation}

Combining Theorem \ref{thm-all-cases-c} with Definition \ref{defn-Sc}, we obtain an upper bound for the number of generic quadrisecants of a polygonal knot in general position.

\begin{cor} \label{cor-upper-bound-QS} Let $K$ be a polygonal knot in general position with exactly $n$ edges. Then the  number $U_n= S^{(n)}_{2,2}(K)+S^{(n)}_{3}(K)+2S^{(n)}_{4}(K)$ is an upper bound for the number of generic quadrisecants of $K$.
\end{cor}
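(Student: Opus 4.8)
The plan is to define a map from the set of generic quadrisecants of $K$ to the set of four-element collections of edges of $K$, and then to bound the size of each fiber using Theorem \ref{thm-all-cases-c}.

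First I would verify that every generic quadrisecant $l$ determines a well-defined collection $\Phi(l)$ of exactly four distinct edges. By definition $l$ meets $K$ in exactly four distinct points, none of which lies in $V$; hence each of these points lies in the interior of some edge. No edge can be contained in $l$, for otherwise $l \cap K$ would be infinite, contradicting that $l$ meets $K$ in exactly four points. Consequently $l$ meets each edge in at most one point, so the four intersection points lie on four \emph{distinct} edges. Let $\Phi(l)$ denote this four-element collection. Since $l$ meets $K$ only in these four points, $l$ meets no edge outside $\Phi(l)$; in particular $l$ belongs to the set of generic quadrisecants that intersect all edges of $\Phi(l)$.

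Next, as $\mathcal{E}_4$ ranges over all four-element collections of edges of $K$, the fibers $\Phi^{-1}(\mathcal{E}_4)$ partition the set of generic quadrisecants of $K$. Each fiber $\Phi^{-1}(\mathcal{E}_4)$ is contained in the set of generic quadrisecants intersecting all edges in $\mathcal{E}_4$, so Theorem \ref{thm-all-cases-c} bounds $|\Phi^{-1}(\mathcal{E}_4)|$ by $0$, $0$, $1$, $1$, or $2$ according to whether $\mathcal{E}_4$ is counted by $S^{(n)}_{1}(K)$, $S^{(n)}_{2,1}(K)$, $S^{(n)}_{2,2}(K)$, $S^{(n)}_{3}(K)$, or $S^{(n)}_{4}(K)$ in Definition \ref{defn-Sc}.

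Finally I would sum over all collections. Grouping the fibers according to Definition \ref{defn-Sc} and applying the bounds above together with \eqref{eqn-sum-S_c's}, the total number of generic quadrisecants of $K$ is at most
\[
0\cdot S^{(n)}_{1}(K)+0\cdot S^{(n)}_{2,1}(K)+1\cdot S^{(n)}_{2,2}(K)+1\cdot S^{(n)}_{3}(K)+2\cdot S^{(n)}_{4}(K),
\]
which is exactly $U_n$. The only delicate point is the first step, namely verifying that the four intersection points of a generic quadrisecant genuinely lie on four distinct edges; this is precisely where the hypotheses that $l$ avoids $V$ and meets $K$ in exactly four points are needed. Everything else is bookkeeping with Theorem \ref{thm-all-cases-c} and the partition of the four-element collections recorded in Definition \ref{defn-Sc}.
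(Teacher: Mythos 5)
Your proposal is correct and is exactly the argument the paper leaves implicit when it states the corollary as an immediate consequence of Theorem \ref{thm-all-cases-c} and Definition \ref{defn-Sc}: partition the generic quadrisecants according to the four-element edge collection they meet, and bound each class by the appropriate case of the theorem. Your added verification that the four intersection points lie on four distinct edges (since a line meeting a segment twice would contain it) is a worthwhile detail the paper omits, but the route is the same.
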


In our next result we find explicit formulas for the numbers  $S^{(n)}_{c}(K)$'s.

\begin{thm} \label{thm-comp-S_c's} Let $K$ be a polygonal knot in general position with exactly $n$ edges. Then

\begin{equation} \label{eqn-S_c-c=1}
S^{(n)}_{1}(K) = \begin{cases} 0 &\mbox{if } n=3\\
1 & \mbox{if } n=4\\
n & \mbox{if } n \geq 5;\\ \end{cases}
\end{equation}

\begin{equation} \label{eqn-S_c-c=2,1}
S^{(n)}_{2,1}(K) = \begin{cases} 0 &\mbox{if } n \leq 5\\
n(n-5) & \mbox{if } n \geq 6;\\ \end{cases}
\end{equation}

\begin{equation} \label{eqn-S_c-c=2,2}
S^{(n)}_{2,2}(K) = \begin{cases} 0 &\mbox{if } n \leq 5\\
\frac{n(n-5)}{2} & \mbox{if } n \geq 6;\\ \end{cases}
\end{equation}

\begin{equation} \label{eqn-S_c-c=3}
S^{(n)}_{3}(K) = \begin{cases} 0 &\mbox{if } n \leq 6\\
\frac{n(n-5)(n-6)}{2} & \mbox{if } n \geq 7;\\ \end{cases}
\end{equation}

\begin{equation} \label{eqn-S_c-c=4}
S^{(n)}_{4}(K) = \begin{cases} 0 &\mbox{if } n \leq 7\\
{n \choose 4}-\frac{n(n-5)(n-6)}{2}-\frac{n(n-5)}{2}-n(n-5)-n & \mbox{if } n \geq 8.\\ \end{cases}
\end{equation}

\end{thm}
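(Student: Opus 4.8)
The plan is to reduce the computation of each $S^{(n)}_c(K)$ to a purely combinatorial count on the cyclic sequence of edges. I would label the edges $e_1,\dots,e_n$ in cyclic order, so that $e_i$ and $e_{i+1}$ share a vertex (indices mod $n$). For a collection $\mathcal{E}_4$ of four edges, the connected components of $X_{\mathcal{E}_4}$ are exactly the maximal runs of cyclically consecutive chosen edges; hence $c$ equals the number of such runs (``blocks''), and the finer data recorded by $S^{(n)}_{2,1}$ and $S^{(n)}_{2,2}$ is the multiset of block sizes. Since four edges distribute into $c$ blocks, the size pattern is forced: $c=1$ gives $(4)$; $c=2$ gives either $(3,1)$, counted by $S_{2,1}$, or $(2,2)$, counted by $S_{2,2}$; $c=3$ gives $(2,1,1)$; and $c=4$ gives $(1,1,1,1)$. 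Thus the knot plays no role beyond providing the cyclic adjacency, and everything reduces to counting $4$-subsets of $\mathbb{Z}_n$ with a prescribed block structure.

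Next I would count each pattern directly by an \emph{anchor-and-gap} argument, and here lies the main obstacle, namely \textbf{maximality}: when I place a block and then additional edges, the additional edges must avoid not only the block itself but also its two neighboring edges, or else blocks would merge and $c$ would change. Concretely, for $S_1$ (pattern $(4)$) I count runs of four consecutive edges, obtaining $n$ for $n\ge 5$, with $n=4$ (the whole knot, a single $4$-subset) and $n=3$ (no $4$-subset) as exceptional. For $S_{2,1}$ (pattern $(3,1)$) I choose the run of three in $n$ ways and then the isolated edge in the arc obtained by deleting the run and its two neighbors, i.e.\ from the $n-5$ remaining positions, giving $n(n-5)$. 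For $S_3$ (pattern $(2,1,1)$) I choose the size-two block in $n$ ways, delete it together with its two neighbors to leave a \emph{path} of $n-4$ positions, and then choose two non-adjacent singletons on that path, which can be done in $\binom{n-5}{2}$ ways; hence $S_3=n\binom{n-5}{2}=\tfrac{n(n-5)(n-6)}{2}$.

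For $S_{2,2}$ (pattern $(2,2)$) the two blocks have equal size, so I would count \emph{ordered} pairs and divide by two: the first block admits $n$ placements, and after deleting it with its two neighbors the second block must be one of the $n-5$ consecutive pairs inside the remaining path, giving $\tfrac{n(n-5)}{2}$. In each case the closed form is valid precisely above the stated threshold, the binomial coefficients vanishing (or the count going negative) below it; I would dispose of the small values of $n$ by noting that the relevant pattern cannot occur for lack of edges, which accounts for the piecewise statements. Finally, rather than count $S_4$ directly, I would read equation \eqref{eqn-S_c-c=4} off the partition identity \eqref{eqn-sum-S_c's}, subtracting the four quantities already found from $\binom{n}{4}$.
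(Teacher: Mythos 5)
Your proposal is correct and follows essentially the same route as the paper: label the edges cyclically, classify $4$-subsets by their block-size pattern, count each pattern by placing an anchor block and excluding its two neighboring edges, and obtain $S^{(n)}_4(K)$ from the partition identity \eqref{eqn-sum-S_c's}. The only cosmetic difference is in the $c=3$ count, where you invoke the non-adjacent-selection formula $\binom{n-5}{2}$ on the residual path while the paper computes the same quantity as $\binom{n-4}{2}-(n-5)$.
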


\begin{proof} Fix an orientation of $K$ and an edge $e_1$ of $K$. Suppose that $e_1,e_2,\cdots,e_n$ (in that order) are all the distinct edges of $K$ that we encounter when we follow the orientation of $K$, starting and ending at the initial point of $e_1$. For the rest of the proof, the subindices of the edges $e_j$'s are understood modulo $n$. 

\begin{description}
 \item [Proof of equation \eqref{eqn-S_c-c=1}] Clearly, $S^{(n)}_{1}(K)=0$ for $n=3$ and $S^{(n)}_{1}(K)=1$ for $n=4$. Suppose that $n \geq 5$.  Let $\mathcal{E}_4$ be a collection of four distinct edges of $K$ such that $X_{\mathcal{E}_4}$ is connected. The collection $\mathcal{E}_4$ is completely determined by the only integer $i \in \{1,2,\cdots,n\}$ such that $\mathcal{E}_4=\{e_i,e_{i+1},e_{i+2},e_{i+3}\}$. Since this number $i$ can be chosen in $n$ different ways, then $S^{(n)}_{1}(K)=n$.
 
 \item [Proof of equation \eqref{eqn-S_c-c=2,1}]
 If $n \leq 5$, then clearly $S^{(n)}_{2,1}(K)=0$ and $S^{(n)}_{2,2}(K)=0$. For the proof of equations \eqref{eqn-S_c-c=2,1} and \eqref{eqn-S_c-c=2,2}, we will assume that $n \geq 6$. 

 Let $\mathcal{E}_4$ be a collection of four distinct edges of $K$ such that $X_{\mathcal{E}_4}$ has exactly two connected components, $X_1$ and $X_2$, with $X_1$ consisting of a single edge of $K$. Let $\mathcal{E}_3$ be the collection of the three consecutive edges in $X_2$. There are $n$ different ways to choose  the collection $\mathcal{E}_3$. Once we have chosen the three edges $e_i,e_{i+1},e_{i+2}$ in $X_2$, the edge in $X_1$  has to be different from the edges $e_{i-1},e_i,e_{i+1},e_{i+2},e_{i+3}$. Thus, given the edges in $X_2$, the edge in $X_1$ can be chosen in $n-5$ different ways.  Hence, the number $S^{(n)}_{2,1}(K)$ is equal to $n(n-5)$.

\item [Proof of equation \eqref{eqn-S_c-c=2,2}] We may assume that $n \geq 6$. Let $\mathcal{E}_4$ be a collection of four distinct edges of $K$ such that $X_{\mathcal{E}_4}$ has exactly two connected components, $X_1$ and $X_2$, with each $X_i$ being the union of exactly two consecutive edges of $K$. There are $n$ different ways to choose the collection of edges in $X_1$. Once we have chosen the two edges in $X_1$, the edges in $X_2$ can be chosen in $n-5$ different ways. However we are double-counting, as interchanging the collections $X_1$ and $X_2$ produces the same collection $X_{\mathcal{E}_4}$. Therefore, $S^{(n)}_{2,2}(K)=\frac{n(n-5)}{2}$.
\item [Proof of equation \eqref{eqn-S_c-c=3}] We may assume that $n \geq 7$. Let $\mathcal{E}_4$ be a collection of four distinct edges of $K$ such that $X_{\mathcal{E}_4}$ has exactly three connected components, $X_1$, $X_2$ and $X_3$, with $X_1$ being the union of exactly two edges of $K$. There are $n$ different ways to choose the collection of edges in $X_1$. Once we have chosen the two edges in $X_1$, the two edges in $X_2 \sqcup X_3$ can be chosen in  ${n-4 \choose 2}-k$ different ways, where $k$ is the number of different ways to choose a collection of two consecutive edges out of $n-4$ edges. Since $k=n-5$, then the collection of edges in $X_2 \sqcup X_3$ can be chosen in  ${n-4 \choose 2}-(n-5)=\frac{(n-5)(n-6)}{2}$ different ways. Hence, the number $S^{(n)}_{3}(K)$ is equal to $\frac{n(n-5)(n-6)}{2}$.
 
\item [Proof of equation \eqref{eqn-S_c-c=4}] We may assume that $n \geq 8$. By equation \eqref{eqn-sum-S_c's}, \[S^{(n)}_{4}(K)={n \choose 4}-S^{(n)}_{1}(K)-S^{(n)}_{2,1}(K)-S^{(n)}_{2,2}(K)-S^{(n)}_{3}(K).\] Thus, equation \eqref{eqn-S_c-c=4} follows from equations \eqref{eqn-S_c-c=1} to \eqref{eqn-S_c-c=3}.
\end{description}
 \end{proof}

\section{The Main Result}

Combining Corollary \ref{cor-upper-bound-QS} with Theorem \ref{thm-comp-S_c's}, we obtain an \emph{explicit} upper bound for the number of generic quadrisecants of a polygonal knot in general position.

\begin{cor} \label{cor-explicit-upper-bound-QS} Let $K$ be a polygonal knot in general position with exactly $n$ edges.
\begin{enumerate}
 \item If $n \leq 5$, then $K$ has no generic quadrisecant.
 \item If $n=6$, then $K$ has at most three generic quadrisecants.
 \item If $n=7$, then $K$ has at most $14$ generic quadrisecants.
 \item If $n \geq 8$, then $K$ has at most $\dfrac{n}{12}(n-3)(n-4)(n-5)$ generic quadrisecants.
\end{enumerate}
\end{cor}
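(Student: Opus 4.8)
The plan is to combine the upper bound $U_n = S^{(n)}_{2,2}(K) + S^{(n)}_{3}(K) + 2S^{(n)}_{4}(K)$ from Corollary \ref{cor-upper-bound-QS} with the explicit counts established in Theorem \ref{thm-comp-S_c's}, and then simplify. Since each relevant formula is piecewise-defined with a threshold ($n \ge 6$ for $S^{(n)}_{2,2}$, $n \ge 7$ for $S^{(n)}_{3}$, and $n \ge 8$ for $S^{(n)}_{4}$), I would organize the computation according to the ranges in which these thresholds activate: $n \le 5$, $n = 6$, $n = 7$, and $n \ge 8$. Note that $U_n$ does not involve $S^{(n)}_{1}(K)$ or $S^{(n)}_{2,1}(K)$, so only the three thresholds above are relevant.

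First I would dispatch the small cases by direct substitution. For $n \le 5$ all three contributing counts vanish, so $U_n = 0$, giving part (1). For $n = 6$ only $S^{(6)}_{2,2}(K) = \tfrac{6 \cdot 1}{2} = 3$ is nonzero, while $S^{(6)}_{3}(K) = S^{(6)}_{4}(K) = 0$, so $U_6 = 3$, giving part (2). For $n = 7$ we have $S^{(7)}_{2,2}(K) = 7$, $S^{(7)}_{3}(K) = \tfrac{7 \cdot 2 \cdot 1}{2} = 7$, and $S^{(7)}_{4}(K) = 0$, so $U_7 = 14$, giving part (3).

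For part (4), in the range $n \ge 8$ every count lies in its generic branch. I would substitute $S^{(n)}_{2,2}(K) = \tfrac{n(n-5)}{2}$, $S^{(n)}_{3}(K) = \tfrac{n(n-5)(n-6)}{2}$, and the expression for $S^{(n)}_{4}(K)$ from \eqref{eqn-S_c-c=4} into $U_n$, using $2\binom{n}{4} = \tfrac{n(n-1)(n-2)(n-3)}{12}$. After collecting terms and factoring out $\tfrac{n}{12}$, the claim reduces to the polynomial identity $(n-1)(n-2)(n-3) - 6(n-5)(n-6) - 30(n-5) - 24 = (n-3)(n-4)(n-5)$, where both sides expand to $n^3 - 12n^2 + 47n - 60$. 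Verifying this cubic identity is the only real content of part (4), and it is routine.

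There is no genuine obstacle here; the only care required is bookkeeping the piecewise thresholds so that each value of $n$ is assigned the correct branch of every formula, and then carrying out the cubic algebra without slips. As a consistency check I would observe that substituting $n = 5, 6, 7$ into $\tfrac{n}{12}(n-3)(n-4)(n-5)$ yields $0$, $3$, and $14$ respectively, so the single expression $U_n = \tfrac{n}{12}(n-3)(n-4)(n-5)$ in fact encompasses parts (1)--(3) as well, which recovers Main Theorem \ref{main-thm-1} for all $n \ge 3$.
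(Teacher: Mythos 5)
Your proposal is correct and follows essentially the same route as the paper: substitute the piecewise formulas of Theorem \ref{thm-comp-S_c's} into $U_n= S^{(n)}_{2,2}(K)+S^{(n)}_{3}(K)+2S^{(n)}_{4}(K)$, handle $n\le 5$, $n=6$, $n=7$ by direct evaluation, and for $n\ge 8$ reduce to the same cubic identity (the paper's equation \eqref{eqn-pf-cor-explicit-upper-bound-QS-2} is exactly your bracketed expression before factoring out $\tfrac{n}{12}$). Your closing observation that the single formula $\tfrac{n}{12}(n-3)(n-4)(n-5)$ subsumes all cases matches the paper's remark leading to Theorem \ref{thm-upper-bound-QS-single-form}.
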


\begin{proof} By Corollary \ref{cor-upper-bound-QS}, the knot $K$ has at most $U_n= S^{(n)}_{2,2}(K)+S^{(n)}_{3}(K)+2S^{(n)}_{4}(K)$ generic quadrisecants. 
\begin{enumerate}
\item Suppose that $n \leq 5$. Then $S^{(n)}_{2,2}(K)=0=S^{(n)}_{3}(K)=S^{(n)}_{4}(K)$, and so $U_n=0$.
\item Suppose that $n=6$. Then $S^{(6)}_{2,2}(K)=3$, $S^{(6)}_{3}(K)=0$ and $S^{(6)}_{4}(K)=0$, so $U_n=3$.
\item Suppose that $n=7$. Then $S^{(7)}_{2,2}(K)=7$, $S^{(7)}_{3}(K)=7$ and $S^{(7)}_{4}(K)=0$, so $U_n=14$.
\item Suppose that $n \geq 8$. By equation \eqref{eqn-sum-S_c's}, \[S^{(n)}_{4}(K)={n \choose 4}-S^{(n)}_{1}(K)-S^{(n)}_{2,1}(K)-S^{(n)}_{2,2}(K)-S^{(n)}_{3}(K).\] Thus, 
 \begin{equation} \label{eqn-pf-cor-explicit-upper-bound-QS-1}
  U_n=2{n \choose 4}-2S^{(n)}_{1}(K)-2S^{(n)}_{2,1}(K)-S^{(n)}_{2,2}(K)-S^{(n)}_{3}(K).
 \end{equation}

By Theorem \ref{thm-comp-S_c's}, equation \eqref{eqn-pf-cor-explicit-upper-bound-QS-1} becomes:

\begin{equation} \label{eqn-pf-cor-explicit-upper-bound-QS-2}
U_n=\frac{1}{12}n(n-1)(n-2)(n-3)-2n-2n(n-5)-\frac{n(n-5)}{2}-\frac{n(n-5)(n-6)}{2}.
\end{equation}

Equation \eqref{eqn-pf-cor-explicit-upper-bound-QS-2} can be written as $\frac{n}{12}(n-3)(n-4)(n-5)$.
\end{enumerate}

\end{proof}

Notice that the expression $\dfrac{n}{12}(n-3)(n-4)(n-5)$ from Corollary \ref{cor-explicit-upper-bound-QS} is equal to zero for $n=3,4,5$; it is equal to three for $n=6$, and it is equal to $14$ for $n=7$. This means that Corollary \ref{cor-explicit-upper-bound-QS} can be reformulates as follows.

\begin{thm} \label{thm-upper-bound-QS-single-form}
Let $K$ be a polygonal knot in general position with exactly $n$ edges. Then $K$ has at most $U_n=\dfrac{n}{12}(n-3)(n-4)(n-5)$ generic quadrisecants.
\end{thm}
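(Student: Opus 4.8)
The plan is to obtain this uniform statement as a cosmetic repackaging of Corollary \ref{cor-explicit-upper-bound-QS}, in which all of the genuine geometric and combinatorial content already resides. That corollary bounds the number of generic quadrisecants of $K$ by a quantity $U_n$ stated piecewise: $U_n = 0$ for $n \le 5$, $U_n = 3$ for $n = 6$, $U_n = 14$ for $n = 7$, and $U_n = \frac{n}{12}(n-3)(n-4)(n-5)$ for $n \ge 8$. The task is therefore to check that the single polynomial $\frac{n}{12}(n-3)(n-4)(n-5)$ reproduces each of these case-values, so that the four cases collapse into one expression and the bound becomes visibly valid for all $n \ge 3$.

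First I would dispose of the range $n \ge 8$, where nothing is required: there the corollary already states the bound in exactly the claimed polynomial form. Next I would evaluate the polynomial at the remaining values. For $n \in \{3,4,5\}$ the product $(n-3)(n-4)(n-5)$ vanishes because one of its factors is zero, so the polynomial equals $0$, matching the corollary's assertion that $K$ has no generic quadrisecant. For $n = 6$ one computes $\frac{6}{12}\cdot 3 \cdot 2 \cdot 1 = 3$, and for $n = 7$ one computes $\frac{7}{12}\cdot 4 \cdot 3 \cdot 2 = 14$, each agreeing with the corresponding case of the corollary. Since in every case the value of the polynomial coincides with the already-proven bound $U_n$, the polynomial is itself a valid upper bound for all $n \ge 3$, which is precisely the assertion of the theorem.

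There is no real obstacle here; the difficulty has already been absorbed into the chain of results culminating in Corollary \ref{cor-explicit-upper-bound-QS}. The only mildly substantive point, and the reason the unification works at all, is structural rather than computational: the bound derived for large $n$ happens to be presented in factored form with roots precisely at $n = 3,4,5$, so it automatically returns $0$ on exactly the degenerate range where a knot has too few edges to admit any generic quadrisecant, while taking the small integer values $3$ and $14$ at the two intermediate cases that the geometry forces. Thus the verification reduces to a finite check at the five integers $n = 3,4,5,6,7$ together with the identity already recorded for $n \ge 8$, and I would present it exactly in that order.
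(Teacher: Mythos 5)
Your proposal is correct and is essentially identical to the paper's own argument: the paper likewise derives the theorem by observing that the polynomial $\frac{n}{12}(n-3)(n-4)(n-5)$ equals $0$ for $n=3,4,5$, equals $3$ for $n=6$, and equals $14$ for $n=7$, so the piecewise bound of Corollary \ref{cor-explicit-upper-bound-QS} collapses into the single formula. Your arithmetic checks at each value are accurate, so nothing further is needed.
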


\subsection{Examples}
\begin{itemize}
 \item Let $K$ be a trivial knot with exactly $n \leq 5$ edges. Then $U_3=0$, so $K$ has no generic quadrisecants.
 \item Let $K$ be a hexagonal trefoil knot. Then $U_6=3$, so $K$ has at most three generic quadrisecants. In this case, the upper bound given by $U_6$ is sharp, as $K$ has exactly three generic quadrisecants (see \cite{Taek-1}). Figure \ref{allquadrisecants} shows a hexagonal trefoil knot with its three generic quadrisecants.
 \item Let $K$ be a heptagonal figure eight knot. We have found that $K$ has at least 6 generic quadrisecants. On the other hand, the upper bound given by $U_7$ may not be sharp in this case, as $U_7=14$.

\begin{figure}[ht!]
\centering
\includegraphics[scale=0.7]{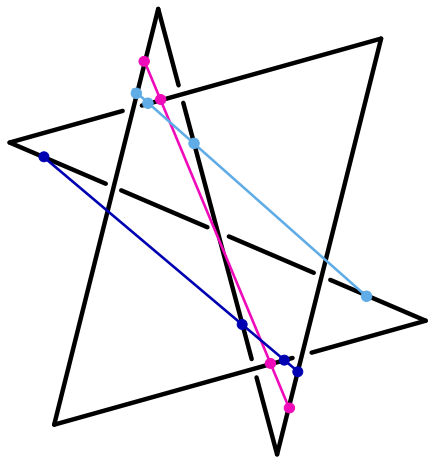}
\caption{The three quadrisecants of a hexagonal trefoil knot.}
\label{allquadrisecants}
\end{figure}
\end{itemize}

\bibliographystyle{abbrv}
\bibliography{biblio}

\end{document}